\newtheorem{Lemma}             {Lemma}
\newtheorem{Corollary}  [Lemma]{Corollary}
\newtheorem{Example}    [Lemma]{Example}
\newtheorem{Theorem}    [Lemma]{Theorem}
\title[Counting real characters]{Counting real characters after Gallagher}
\author{John C. Murray}
\address{Department of Mathematics and Statistics\\National University of Ireland Maynooth\\IRELAND}
\email{John.Murray@maths.nuim.ie}
\date{January 3, 2023}
\begin{document}
\begin{abstract}
Let $G$ be a finite group, let $N$ be a normal subgroup of $G$ and let $\theta$ be an irreducible character of $N$. We count the real irreducible characters of $G$ lying over $\theta$. 
\end{abstract}

\maketitle

\section{Introduction}

Let $G$ be a finite group and let $\theta$ be an irreducible character of a normal subgroup $N$ of $G$. Recall that an irreducible character $\chi$ of $G$ is said to lie over $\theta$ if $\chi$ is a constituent of the induced character $\theta{\uparrow^G}$. We first remind the reader of P.~X.~Gallagher's method for counting the irreducible characters of $G$ which lie over $\theta$. To this end let $G_\theta$ be the inertia group of $\theta$ in $G$. If $x\in G_\theta$, then $\theta$ extends to an irreducible character $\theta_x$ of $N\langle x\rangle$. Set $\overline G=G/N$. Then $\overline x$ has centralizer $\operatorname{C}_G(\overline x):=\{g\in G\mid \overline x^g=\overline x\}$ in $G$. We say that $\overline x$ is good for $\theta$ if $\theta_x^c=\theta_x$, for all $c\in\operatorname{C}_G(\overline x)$. This does not depend on the choice $\theta_x$ of extension. Also $\overline x\in\overline G_\theta$ is good for $\theta$ if and only if all of its $\overline G_\theta$-conjugates are good for $\theta$. So we can refer to the $\theta$-good conjugacy classes of $\overline G_\theta$.

\medskip
{\bf Gallagher's Theorem.}\label{T:Gallagher}
{\em The number of irreducible characters of\/ $G$ lying over $\theta$ equals the number of\/ $\theta$-good conjugacy classes of\/ $G_\theta/N$.}
\medskip

In this paper we modify Gallagher's proof to count the real-valued irreducible characters of $G$ which lie over $\theta$. If $G$ has such a character $\chi$, then $\chi$ also lies over the dual character $\overline\theta$. So we can and do assume from now on that $\overline\theta$ is $G$-conjugate to $\theta$.

The extended inertia group of $\theta$ in $G$ is the group
$$
G_\theta^*:=\{g\in G\mid \theta^g=\theta\mbox{ or }\overline\theta\}.
$$
Recall also that an element is said to be real in $G$ if it is conjugate to its inverse in $G$.

Let $\overline x\in\overline G_\theta$ be good for $\theta$, as in Gallagher's Theorem. We assign an indicator $\sigma(\overline x)=0,\pm1$ to $\overline x$, as follows. Suppose first that there is $e\in G_\theta^*$ such that $(\overline x,\theta)^e=(\overline x^{-1},\overline\theta)$. Then we set $\sigma(\overline x)=+1$ if $\theta_x^e=\overline\theta_x$ and $\sigma(\overline x)=-1$ if $\theta_x^e\ne\overline\theta_x$. If no such $e$ exists (in particular if $\overline x$ is not real in $\overline G_\theta^*$), we set $\sigma(\overline x)=0$. Then $\sigma(\overline x)$ does not depend on the choice of $e$. Moreover $\sigma(\overline y)=\sigma(\overline x)$, for all $\overline y\in\overline G$ which are $\overline G$-conjugate to $\overline x$. So we can refer to each $\theta$-good conjugacy class of $G_\theta$ as being of $\sigma_+$, $\sigma_-$ or $\sigma_0$ type. Our main result is:

\begin{Theorem}\label{T:main}
The number of real irreducible characters of\/ $G$ lying over $\theta$ equals the number of\/ $\theta$-good conjugacy classes of\/ $G_\theta/N$ of\/ $\sigma_+$ type minus the number of\/ $\theta$-good conjugacy classes of\/ $G_\theta/N$ of\/ $\sigma_-$ type.
\end{Theorem}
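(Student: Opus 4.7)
The plan follows Gallagher's proof, augmented at each stage to track complex conjugation.

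First, I would use the Clifford correspondence $\psi\mapsto\psi^G$, which is a bijection $\mathrm{Irr}(G_\theta\mid\theta)\to\mathrm{Irr}(G\mid\theta)$, to reduce to the inertia group. Since $\overline\theta$ is by assumption $G$-conjugate to $\theta$, one checks that $\psi^G$ is real-valued if and only if there exists $e\in G_\theta^*$ with $\theta^e=\overline\theta$ and $\overline\psi=\psi^e$, a twisted self-duality. The task becomes: count the $\psi\in\mathrm{Irr}(G_\theta\mid\theta)$ satisfying this condition.

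Next, I would apply the projective-character formalism underlying Gallagher's theorem. The chosen extensions $\theta_x$ determine a $2$-cocycle $\alpha$ on $\overline{G_\theta}$, and there is a bijection between $\mathrm{Irr}(G_\theta\mid\theta)$ and the irreducible $\alpha$-projective characters $\rho_\psi$ of $\overline{G_\theta}$ with $\psi(x)=\theta_x(x)\rho_\psi(\overline x)$ on $\theta$-good representatives. Substituting into the twisted self-duality, the condition reformulates on each good class as $\rho_\psi(\overline x^e)=s(\overline x)\cdot\overline{\rho_\psi(\overline x)}$, where $s(\overline x)$ is the scalar relating the two extensions $\theta_x^e$ and $\overline{\theta_x}$ of $\overline\theta$ to $N\langle x\rangle$. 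Because $e^2\in\mathrm{C}_G(\overline x)\cap G_\theta$ and $\overline x$ is $\theta$-good, a short calculation forces $s(\overline x)^2=1$, so $s(\overline x)=\pm 1$ whenever $e$ exists. This scalar matches $\sigma(\overline x)$ exactly, and the same computation establishes its independence from the choices of $e$ and $\theta_x$.

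Finally, I would count the $\rho_\psi$ satisfying the twisted condition by a projective-character analogue of Brauer's permutation lemma, equivalently by computing the trace of the antilinear involution $\rho\mapsto\overline\rho\circ e^{-1}$ on the space of $\alpha$-class functions on $\overline{G_\theta}$. By the orthogonality relations for projective characters, this trace equals the signed sum $\sum_{[\overline x]\text{ good}}\sigma(\overline x)=\#\sigma_+-\#\sigma_-$, while on the other side it counts the fixed $\rho_\psi$, i.e., the real characters in $\mathrm{Irr}(G\mid\theta)$. The main technical obstacle is the identification of $s(\overline x)$ with $\sigma(\overline x)$, including its independence from the choice of $e$ (particularly in the delicate case where $\theta$ is non-real and $e$ must be taken in $G_\theta^*\setminus G_\theta$) and from the extensions $\theta_x$; the signed Brauer fixed-point count is thereafter essentially formal.
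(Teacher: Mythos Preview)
Your approach is plausible but takes a genuinely different route from the paper. The paper does not pass through projective characters or a Brauer-type fixed-point count; instead it mimics Gallagher's original argument directly, replacing the commutator sum $T$ by the central element
\[
T_{\mathbb R}=\sum_{x,y\in G}xy^{-1}xy=\sum_{K\in\operatorname{Cl}(G)}|C_G(g_K)|(K^+)^2,
\]
proves $\chi(1)\chi(T_{\mathbb R})=\epsilon(\chi)^2|G|^2$ for all $\chi\in\operatorname{Irr}(G)$, and then evaluates $\theta(1)\theta{\uparrow^G}(T_{\mathbb R})$. After reducing via (extended) Clifford to $G=G_\theta$ or $G=G_\theta^*$, the sum $\sum\theta(xy^{-1}xy)$ is computed coset by coset: each $\theta$-good $\overline x$ contributes $\sigma(\overline x)\,|\operatorname{C}_G(\overline x)|\,|N|$ via Lemma~\ref{L:Gallagher}, and in the non-real case three types of cosets of $\operatorname{C}_M(\overline x)$ are analysed separately, only one of which survives. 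Your character-triple strategy is more structural and handles the real and non-real cases uniformly; the paper's method is more elementary and avoids cocycles altogether. The paper in fact remarks in Section~\ref{S:gallagher} that a character-triple proof ``should be possible''.

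One point in your outline is not yet correct as stated. You propose to compute ``the trace of the antilinear involution $\rho\mapsto\overline\rho\circ e^{-1}$'', but an antilinear map has no basis-independent trace: on $\mathbb C^2$ the anti-unitary involution $(z_1,z_2)\mapsto(\overline z_1,-\overline z_2)$ has signed fixed-point count $0$ in the standard basis and $+2$ in the orthonormal basis $\{(e_1\pm ie_2)/\sqrt 2\}$. The classical Brauer argument works because complex conjugation on characters agrees with the $\mathbb C$-\emph{linear} operator induced by $g\mapsto g^{-1}$ on conjugacy classes, and it is the trace of that linear operator one compares in two bases. In your twisted setting you must likewise produce a $\mathbb C$-linear endomorphism of the space of $\alpha$-class functions on good classes---pullback along $\overline x\mapsto (\overline x^{e})^{-1}$ together with the sign $\sigma(\overline x)$---and check that in the $\rho_\psi$-basis it acts as the honest permutation $\psi\mapsto\overline\psi^{e^{-1}}$ with no extraneous scalars. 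That verification absorbs all of the cocycle bookkeeping (and the passage from a global $e\in G_\theta^*$ to the class-by-class inverter used to define $\sigma$), so it is where the real work lies rather than being ``essentially formal''.
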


In particular, the number of real irreducible characters of $G$ lying over $\theta$ depends only on the elements of $\overline G_\theta$ which are good for $\theta$ and which are real in $\overline G_\theta^*$.

Recall that the Frobenius-Schur indicator $\epsilon(\theta)$ takes the values $\{+1,-1,0\}$ and is related to the realisability of $\theta$ over ${\mathbb R}$. For each $t\in G$, with $t^2\in N$, there is a related indicator $\epsilon_{\overline t}(\theta)$ called the Gow-indicator, which also takes the values $\{+1,-1,0\}$ (see Section \ref{S:f-s-induced}). We can label an involution $\overline t\in G/N$ as being of $+1,-1$ or $0$ type, depending on the value of $\epsilon_{\overline t}(\theta)$. Our second result is:

\begin{Theorem}\label{T:epsilon(theta^G)}
The Frobenius-Schur indicator $\epsilon(\theta{\uparrow^G})$ equals the number of $+1$ type involutions in $G_\theta^*/N$ minus the number of $-1$ type involutions in $G_\theta^*/N$.
\end{Theorem}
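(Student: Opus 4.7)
The plan is to compute $\epsilon(\theta{\uparrow^G})$ directly from the Frobenius--Schur definition, collapse the resulting double sum by the induced character formula, and then recognise each surviving coset contribution as a Gow indicator. Starting from $\epsilon(\theta{\uparrow^G})=\frac{1}{|G|}\sum_{g\in G}(\theta{\uparrow^G})(g^2)$ and expanding the induced character as a sum over $G$-conjugates that land in $N$, one interchanges the two summations and substitutes $h=x^{-1}gx$ for each fixed $x\in G$. Since this is a bijection of $G$ with $h^2=x^{-1}g^2x$, the sum over $x$ cancels the factor $|G|$, leaving
\begin{equation*}
\epsilon(\theta{\uparrow^G})\;=\;\frac{1}{|N|}\sum_{\substack{h\in G\\ h^2\in N}}\theta(h^2).
\end{equation*}

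The condition $h^2\in N$ is exactly $\overline h^{\,2}=1$ in $\overline G$, so I would partition the sum by the coset $\overline h=\overline t$. Writing $h=tn$ with $n\in N$, the total contribution of the coset $tN$ is
\begin{equation*}
\frac{1}{|N|}\sum_{n\in N}\theta((tn)^2)\;=\;\epsilon_{\overline t}(\theta),
\end{equation*}
by the definition of the Gow indicator recalled in Section~\ref{S:f-s-induced}; in particular the coset $\overline t=1$ simply contributes the usual Frobenius--Schur indicator $\epsilon(\theta)$. Summing the coset contributions yields
\begin{equation*}
\epsilon(\theta{\uparrow^G})\;=\;\sum_{\overline t\in\overline G,\ \overline t^{\,2}=1}\epsilon_{\overline t}(\theta).
\end{equation*}

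To finish, I would invoke the structural property of the Gow indicator, to be proved in Section~\ref{S:f-s-induced}, that $\epsilon_{\overline t}(\theta)\in\{0,\pm 1\}$ and that $\epsilon_{\overline t}(\theta)=0$ whenever $\overline t\notin G_\theta^*/N$. Granted this, only the involutions (in the sense $\overline t^{\,2}=1$) lying in $G_\theta^*/N$ contribute, and grouping by $+1$ or $-1$ type produces the signed count in the theorem. I expect the main obstacle to lie in this vanishing property; it should follow from a Clifford-theoretic cancellation applied to $\sum_{n\in N}\theta((tn)^2)$, exploiting the orthogonality between $\theta$ and its $t$-conjugate whenever $\theta^t\notin\{\theta,\overline\theta\}$.
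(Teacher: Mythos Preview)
Your argument is correct and follows essentially the same computation as the paper: write out the Frobenius--Schur sum for $\theta{\uparrow^G}$, use the induced-character formula, and regroup by cosets of $N$ to obtain $\epsilon(\theta{\uparrow^G})=\sum_{\overline t^{\,2}=1}\epsilon_{\overline t}(\theta)$. The only organisational difference is that the paper first invokes the extended Clifford correspondence to reduce to $G=G_\theta^*$ and then splits into the cases $\theta=\overline\theta$ and $\theta\neq\overline\theta$ (Lemmas~\ref{L:real} and~\ref{L:non-real}), whereas you work in the original $G$ throughout and appeal at the end to Gow's result that $\epsilon_{\overline t}(\theta)=0$ unless $\theta^t=\overline\theta$ to discard the involutions outside $\overline G_\theta^*$. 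Your route is marginally more uniform; the paper's split, on the other hand, makes explicit which involutions can be of nonzero type in each case. Note finally that the paper does not \emph{prove} the vanishing property you flag as a potential obstacle; it is quoted from Gow's paper \cite{Gow}.
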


We describe the proof of Gallagher's theorem in some detail in Section \ref{S:gallagher}. This will allow us to introduce notation and enable the reader to compare our approach with Gallagher's. In Section \ref{S:f-s-induced} we define the Gow-indicator and prove Theorem \ref{T:epsilon(theta^G)}. We break the proof of Theorem \ref{T:main} into the 2 cases $\theta=\overline\theta$ or $\theta\ne\overline\theta$. Proofs of both cases are given in Section \ref{S:real}.

\section{Gallagher's theorem}\label{S:gallagher}

Our presentation of Gallagher's Theorem is largely as in \cite{Gallagher}, but with some additional details. An alternative approach, detailed in \cite{Navarro}, is to use character triples. This leads to a number of interesting results about character values. It should be possible to prove Theorem \ref{T:main} using similar methods.

We quote the following result from \cite{Gallagher}:

\begin{Lemma}\label{L:Gallagher}
Let $\chi\in\operatorname{Irr}(G)$ and let $H\leq G$ such that $\chi{\downarrow_H}$ is irreducible. Then
$$
\sum_{x\in Hg}|\chi(x)|^2=|H|,\quad\mbox{for each coset $Hg\subseteq G$.}
$$
\end{Lemma}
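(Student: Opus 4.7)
The plan is to write $\chi$ as the character of a unitary representation $\rho:G\to U(n)$ where $n=\chi(1)$, and to reduce the coset sum to a statement that follows from Schur orthogonality applied to the irreducible $H$-representation $\rho{\downarrow_H}$. Parametrising the coset by $h\mapsto hg$, we have
$$
\sum_{x\in Hg}|\chi(x)|^2=\sum_{h\in H}|\chi(hg)|^2=\sum_{h\in H}|\operatorname{tr}(\rho(h)A)|^2,
$$
where $A:=\rho(g)$. So the lemma amounts to showing that for any $n\times n$ complex matrix $A$,
$$
\sum_{h\in H}|\operatorname{tr}(\rho(h)A)|^2=\frac{|H|}{n}\|A\|_F^2,
$$
where $\|A\|_F^2=\sum_{i,j}|A_{ij}|^2$, and then using unitarity of $A$.

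To establish the matrix identity, I would expand $\operatorname{tr}(\rho(h)A)=\sum_{i,j}\rho(h)_{ij}A_{ji}$, multiply by its complex conjugate, sum over $h\in H$, and invoke Schur orthogonality for the matrix coefficients of $\rho{\downarrow_H}$:
$$
\sum_{h\in H}\rho(h)_{ij}\overline{\rho(h)_{kl}}=\frac{|H|}{n}\delta_{ik}\delta_{jl}.
$$
This is precisely the place where the hypothesis $\chi{\downarrow_H}\in\operatorname{Irr}(H)$ is used. After substitution, the Kronecker deltas collapse the quadruple sum to $\frac{|H|}{n}\sum_{i,j}|A_{ji}|^2=\frac{|H|}{n}\|A\|_F^2$, as claimed.

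Finally, since $\rho$ is unitary, $A=\rho(g)$ satisfies $AA^*=I_n$, so every row has squared norm~$1$ and $\|A\|_F^2=n$. Thus the sum equals $|H|$, as required.

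There is no real obstacle here: once one expands in matrix entries, the calculation is forced by Schur orthogonality. The only subtlety worth flagging is the importance of choosing $\rho$ unitary, so that $\|\rho(g)\|_F^2=n$ holds uniformly in $g$; this is what allows the answer $|H|$ to be independent of the coset representative.
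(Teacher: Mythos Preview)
Your argument is correct: expanding $|\chi(hg)|^2$ in matrix coefficients of a unitary model of $\chi$ and applying Schur orthogonality for the irreducible restriction $\rho{\downarrow_H}$ gives exactly $\frac{|H|}{n}\|\rho(g)\|_F^2=|H|$. The paper does not actually prove this lemma; it merely quotes it from \cite{Gallagher}, so there is nothing to compare against beyond noting that your proof supplies the details the paper omits.
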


Let $\operatorname{Cl}(G)$ be the set of conjugacy classes of $G$. For $K\in\operatorname{Cl}(G)$, let $K^o$ be the conjugacy class of inverses of elements of $K$. Also set $K^+$ as the sum of the elements of $K$, taken in the group algebra ${\mathbb C} G$, for all $K\subseteq G$. 

Gallagher begins by defining the following element of $Z({\mathbb C} G)$, where $[x,y]:=x^{-1}y^{-1}xy$ is the commutator of $x$ and $y$:
$$
\begin{aligned}
T:=\sum_{x,y\in G}[x,y]=\sum_{K\in\operatorname{Cl}(G)}\frac{|G|}{|K|}K^{o+}K^+.
\end{aligned}
$$
For $\chi\in\operatorname{Irr}(G)$, let $\omega_\chi:Z({\mathbb C} G)\rightarrow{\mathbb C}$ be the corresponding central character. So $\omega_\chi$ is the algebra homomorphism $\omega_\chi(z)=\chi(z)/\chi(1)$, for all $z\in Z({\mathbb C} G)$, and
$$
\omega_{\chi}(T)=\sum_{K\in\operatorname{Cl}(G)}\frac{|G|}{|K|}\omega_\chi(K^{o+})\omega_\chi(K^+)=\sum_{K\in\operatorname{Cl}(G)}\frac{|G|}{|K|}\frac{|K|^2}{\chi(1)^2}|\chi(k)|^2=\left(\frac{|G|}{\chi(1)}\right)^2.
$$
The last equality follows from the first orthogonality relations. This simplifies to
\begin{equation}\label{E:chi(T)}
\chi(1)\chi(T)=|G|^2,\quad\mbox{for all $\chi\in\operatorname{Irr}(G)$}.
\end{equation}

\begin{proof}[Proof of Gallagher's Theorem]
Recall that $N$ is a normal subgroup of $G$ and $\theta$ is an irreducible character of $N$. The Clifford correspondence is a bijection $\operatorname{Irr}(G\mid\theta)\leftrightarrow\operatorname{Irr}(G_\theta\mid\theta)$ defined by induction-restriction. So we assume that $\theta$ is $G$-invariant, for the rest of this section.

Our assumption gives $\theta(1)\theta{\uparrow^G}=\sum_{\chi\in\operatorname{Irr}(G\mid\theta)}\chi(1)\chi$. This and \eqref{E:chi(T)} implies that
$$
\theta(1)\theta{\uparrow^G}(T)=\sum_{\chi\in\operatorname{Irr}(G\mid\theta)}\chi(1)\chi(T)=|G|^2|\operatorname{Irr}(G\mid\theta)|.
$$
For $x\in G$, notice that $\operatorname{C}_G(\overline{x})=\{y\in G\mid[x,y]\in N\}$. Now $\theta{\uparrow^G}$ vanishes off $N$, and restricts to $|G:N|\theta$ on $N$. So the last displayed formula gives
\begin{equation}\label{E:count}
|\operatorname{Irr}(G\mid\theta)|=\frac{\theta(1)}{|G|\,|N|}\sum_{\substack{x\in G\\y\in\operatorname{C}_G(\overline{x})}}\theta([x,y]).
\end{equation}

Let $x\in G$ and recall that $\theta_x$ is an extension of $\theta$ to $N\langle x\rangle$. All such extensions have the form $\omega\theta_x$, for some $\omega\in\operatorname{Lin}(N\langle x\rangle/N)$. Now $\operatorname{C}_G(\overline{x})$ acts on the extensions of $\theta$ to $N\langle x\rangle$. So given $y\in\operatorname{C}_G(\overline{x})$, there is $\omega_y\in\operatorname{Lin}(\langle\overline x\rangle)$ such that $\theta_x^y=\omega_y\theta_x$. As $\operatorname{C}_G(\overline{x})$ acts trivially on $\operatorname{Lin}(\langle\overline x\rangle)$, $(\omega\theta_x)^y=\omega\theta_x^y=\omega_y(\omega\theta_x)$, for each $\omega\in\operatorname{Lin}(\langle\overline x\rangle)$. So $\omega_y$ does not depend on the choice of extension $\theta_x$. In particular $\omega_{y_1y_2}=\omega_{y_1}\omega_{y_2}$, for all $y_1,y_2\in\operatorname{C}_G(\overline{x})$. This shows that $y\mapsto\omega_y(x)$ is a linear character of $\operatorname{C}_G(\overline{x})$.

We say that the coset $\overline{x}$ is good for $\theta$ if $\theta_x^y=\theta_x$, for all $y\in\operatorname{C}_G(\overline{x})$. So $\overline{x}$ is good for $\theta$ if and only if $y\mapsto\omega_y(x)$ is the trivial character of $\operatorname{C}_G(\overline{x})$. Clearly being good for $\theta$ is independent of the choice $\theta_x$ of extension of $\theta$ to $N\langle x\rangle$.

For $x\in G$ set $X:=\{x^c\mid c\in \operatorname{C}_G(\overline{x})\}$ as the conjugacy class of $\operatorname{C}_G(\overline{x})$ containing $x$. Then $X^+\in Z({\mathbb C}\operatorname{C}_G(\overline{x}))$. As $\sum\limits_{y\in\operatorname{C}_G(\overline{x})}[x,y]=|\operatorname{C}_G(x)|x^{-1}X^+$, we get
$$
\begin{aligned}
\theta(1)\sum_{y\in\operatorname{C}_G(\overline{x})}\theta([x,y])
&=\theta_x(x^{-1})\sum_{y\in\operatorname{C}_G(\overline{x})}\theta_x(x^y)\\
&=|\theta_x(x)|^2\sum_{y\in\operatorname{C}_G(\overline{x})}\omega_y(x)\\
&=
\left\{ 
\begin{array}{ll}
|\operatorname{C}_G(\overline{x})|\,|\theta_x(x)|^2,&\quad\mbox{if $x$ is good for $\theta$.}\\
0,&\quad\mbox{otherwise}.
\end{array}
\right.
\end{aligned}
$$
Summing over all $x\in G$, \eqref{E:count} gives
$$
\begin{aligned}
|\operatorname{Irr}(G\mid\theta)|
&=\frac{1}{|G|\,|N|}\sum_{\overline x\in\overline{G}\,\rm{good}}|\operatorname{C}_G(\overline{x})|\sum_{n\in N}|\theta_x(nx)|^2\\
&=\sum_{\overline x\in\overline{G}\,\rm{good}}\frac{|\operatorname{C}_G(\overline{x})|}{|G|},\quad\mbox{by Lemma \ref{L:Gallagher}, as $\theta_x{\downarrow_N}\in\operatorname{Irr}(N)$}\\
&=\mbox{number of $\theta$-good classes of $G/N$.}
\end{aligned}
$$
\end{proof}

\section{Frobenius-Schur indicator of an induced character}\label{S:f-s-induced}

We continue with our notations for $N$, $\overline G=G/N$, $\theta\in\operatorname{Irr}(N)$, $G_\theta$ and $G_\theta^*$. In this section we prove Theorem \ref{T:epsilon(theta^G)}. This computes $\epsilon(\theta{\uparrow^G})$ in terms of the involutions in $G_\theta^*/N$. Note that $G_\theta\leq G_\theta^*$, $[G_\theta^*:G_\theta]\leq2$ and $G_\theta=G_\theta^*$ if and only if $\theta=\overline\theta$.

Recall that the Frobenius-Schur (F-S) indicator of $\theta$ is
$$
\epsilon(\theta):=\frac{1}{|N|}\sum_{n\in N}\theta(n^2).
$$
Then $\epsilon(\theta)=0$ if $\theta\ne\overline\theta$, $\epsilon(\theta)=+1$, if $\theta$ is the character of a real representation of $N$, and $\epsilon(\theta)=-1$, if $\theta$ has real values but is not the character of a real representation. We extend $\epsilon$ to all generalized characters by linearity.

It is convenient to use the term {\em involution} for a group element of order $1$ or $2$. Let $\Omega(G)$ be the set of involutions in $G$ and let $\Omega(G,N):=\{g\in G\mid g^2\in N\}$ be the set of elements of $G$ whose images are involutions in $\overline G$.

We begin by recalling a generalization of the F-S indicator, which we call the Gow indicator. For all $t\in\Omega(G,N)$, the Gow indicator of $\theta$ (w.r.t. $t$, $\overline t$ or $N\langle t\rangle$) is 
$$
\epsilon_t(\theta):=\frac{1}{|N|}\sum_{n\in N}\theta((nt)^2).
$$
So $\epsilon_t(\theta)=\epsilon(\theta)$, for $t\in N$, and $\epsilon_t(\theta)=\epsilon(\theta{\uparrow^{N\langle t\rangle}})-\epsilon(\theta)$, for $t\in\Omega(G,N)\backslash N$.

R. Gow introduced this indicator in \cite{Gow} and proved the remarkable fact that $\epsilon_t(\theta)\in\{0,-1,1\}$. Moreover $\epsilon_t(\theta)\ne0$ if and only if $\theta^t=\overline\theta$. In \cite{Ichikawa} the $8$ possible non-zero values of $(\epsilon(\theta),\epsilon_t(\theta))$ were identified with the $8$-th roots of unity in ${\mathbb C}$. This determines the element of the Brauer-Wall group of ${\mathbb R}$ which can be associated to the triple $(N,N\langle t\rangle,\theta)$ (the group algebra ${\mathbb C} N\langle t\rangle$ is a ${\mathbb Z}/2{\mathbb Z}$-graded-algebra, with $0$-component ${\mathbb C} N$ and $1$-component ${\mathbb C} N\langle t\rangle\backslash N$).

As we are concerned with reality, we consider the extended Clifford correspondence $\operatorname{Irr}({G\mid\theta})\leftrightarrow\operatorname{Irr}({G_\theta^*\mid\theta})$. This preserves inner-products and F-S indicators. For example $\epsilon(\theta{\uparrow^G})=\epsilon(\theta{\uparrow^{G_\theta^*}})$. So for this section, we will assume that $G=G_\theta^*$, although for the convenience of the reader we state Lemmas \ref{L:real} and \ref{L:non-real} for arbitrary $G$.

\medskip
{\bf Case 1: $\theta$ is real and $G$-invariant.} As $\theta{\uparrow^G}$ vanishes off $N$ and equals $|G:N|\theta$ on $N$, we get
\begin{equation}\label{E:count1}
\epsilon(\theta{\uparrow^G})=\frac{1}{|G|}\sum_{t\in\Omega(G,N)}|G:N|\theta(t^2)=\sum_{\overline t\in\Omega(\overline G)}\epsilon_t(\theta).
\end{equation}
Now for all $t\in\Omega(G,N)$, we have $\epsilon_t(\theta)\ne0$, as $\theta^t=\theta=\overline\theta$. Let $\theta_t$ be one of the two extensions of $\theta$ of $N\langle t\rangle$. If $\epsilon_t(\theta)=\epsilon(\theta)$, then $\theta_t$ is real with F-S indicator $\epsilon(\theta)$. If instead $\epsilon_t(\theta)=-\epsilon(\theta)$, then $\theta_t$ is not real. We say that an involution $\overline t\in\overline G$ is of $\epsilon_t(\theta)$ type for $\theta$, if $\epsilon_t(\theta)=\pm1$. Thus we have proved:

\begin{Lemma}\label{L:real}
Let $\theta$ be real. Then $\epsilon(\theta{\uparrow^G})$ equals the number of \/ $+1$ type involutions in $\overline G_\theta$ minus the number of\/ $-1$ type involutions in $\overline G_\theta$.
\end{Lemma}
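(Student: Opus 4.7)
The plan is to observe that the statement is essentially a repackaging of equation \eqref{E:count1} together with Gow's theorem, combined with a small reduction step to handle the arbitrary-$G$ formulation.

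First, I would reduce to the case where $\theta$ is $G$-invariant. Since $\theta$ is real we have $\theta=\overline\theta$, so the extended inertia group $G_\theta^*$ coincides with $G_\theta$. The extended Clifford correspondence $\operatorname{Irr}(G\mid\theta)\leftrightarrow\operatorname{Irr}(G_\theta^*\mid\theta)=\operatorname{Irr}(G_\theta\mid\theta)$ preserves inner products and hence F-S indicators, so $\epsilon(\theta{\uparrow^G})=\epsilon(\theta{\uparrow^{G_\theta}})$. This reduction also matches the set $\Omega(\overline G_\theta)$ appearing in the statement, since replacing $G$ by $G_\theta$ replaces $\Omega(\overline G)$ by $\Omega(\overline G_\theta)$.

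Assuming now $G=G_\theta$, the calculation already carried out in the paragraph preceding the lemma applies verbatim: because $\theta{\uparrow^G}$ vanishes off $N$ and restricts to $|G:N|\theta$ on $N$, partitioning $\Omega(G,N)$ into the $N$-cosets indexed by $\Omega(\overline G)$ and using the definition of $\epsilon_t(\theta)$ yields
$$
\epsilon(\theta{\uparrow^G})=\frac{1}{|G|}\sum_{t\in\Omega(G,N)}|G:N|\,\theta(t^2)=\sum_{\overline t\in\Omega(\overline G)}\epsilon_t(\theta),
$$
which is \eqref{E:count1}. Now for every $t\in\Omega(G,N)$ the $G$-invariance and reality of $\theta$ give $\theta^t=\theta=\overline\theta$, so Gow's theorem forces $\epsilon_t(\theta)\in\{+1,-1\}$. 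Consequently the sum partitions according to the sign and equals the number of $+1$-type involutions in $\overline G_\theta$ minus the number of $-1$-type involutions, as claimed.

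There is no genuine obstacle: all the hard work is already done, namely Gow's theorem ($\epsilon_t(\theta)\in\{-1,0,+1\}$ and vanishing is equivalent to $\theta^t\ne\overline\theta$) and the computation producing \eqref{E:count1}. The only things to check are the two bookkeeping points above, namely that the extended Clifford correspondence preserves $\epsilon(\theta{\uparrow^G})$, and that the Gow-indicator terms are all nonzero in the real $G$-invariant case.
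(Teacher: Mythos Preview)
Your proposal is correct and follows essentially the same approach as the paper: reduce to the $G$-invariant case via the extended Clifford correspondence (noting $G_\theta^*=G_\theta$ since $\theta$ is real), invoke \eqref{E:count1}, and use Gow's result that $\epsilon_t(\theta)\ne0$ whenever $\theta^t=\overline\theta$. The paper carries out exactly these steps in the paragraphs immediately preceding the lemma.
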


\begin{Example}
Take $G=Q_8$, $N=Z(Q_8)$ and $\theta$ the non-trivial linear character of $Z(Q_8)$. Then $G/N$ has 3 non-trivial involutions, each of $-1$ type. So $\epsilon(\theta^G)=1-3=-2$. Indeed $\theta{\uparrow^G}=2\chi$, where $\chi\in\operatorname{Irr}(G)$ has F-S indicator $-1$.
\end{Example}

As an application of Lemma \ref{L:real}, we give a quick proof of the main result in \cite{Richards}:

\begin{Corollary}
Suppose that $|G_\theta/N|$ is odd. Then each real $\theta\in\operatorname{Irr}(N)$ has a unique real extension to $G_\theta$, and there is a unique real character in $\operatorname{Irr}(G\mid\theta)$.
\begin{proof}
We may assume that $\theta$ is $G$-invariant. As $\Omega(G,N)=N$, we get $\sum_{\chi\in\operatorname{Irr}(G\mid\theta)}\epsilon(\chi)\chi(1)=\epsilon(\theta)\theta(1)$. But $\langle\chi{\downarrow_N},\theta\rangle$ is odd, for all $\chi\in\operatorname{Irr}(G\mid\theta)$. So $\epsilon(\chi)=\epsilon(\theta)$, if $\chi=\overline\chi$. We conclude that there is a unique real $\chi\in\operatorname{Irr}(G\mid\theta)$, and moreover, $\chi{\downarrow_N}=\theta$.
\end{proof}
\end{Corollary}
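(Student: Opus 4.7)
The plan is to reduce via Clifford correspondence to $\theta$ being $G$-invariant, so $G=G_\theta$ and $|G/N|$ is odd. Any $g\in G$ with $g^2\in N$ projects to an involution in the odd-order group $\overline G$, hence lies in $N$; thus $\Omega(G,N)=N$ and $\Omega(\overline G)=\{1\}$. Lemma~\ref{L:real} applied to the sole (trivial) coset then yields
$$
\epsilon(\theta{\uparrow^G})=\epsilon_1(\theta)=\epsilon(\theta).
$$
Writing the Clifford decomposition $\theta{\uparrow^G}=\sum_{\chi\in\operatorname{Irr}(G\mid\theta)}e_\chi\chi$ with $e_\chi=\chi(1)/\theta(1)=\langle\chi{\downarrow_N},\theta\rangle$, linearity of the Frobenius--Schur indicator gives
$$
\sum_{\chi\in\operatorname{Irr}(G\mid\theta)}\epsilon(\chi)\chi(1)=\epsilon(\theta)\theta(1).
$$

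Next I invoke the classical ramification bound $e_\chi\mid[G:N]$: since $|G/N|$ is odd, every $e_\chi$ is odd. Non-real $\chi$ contribute $0$, so the identity collapses to $\sum_{\chi=\overline\chi}e_\chi\epsilon(\chi)=\epsilon(\theta)=\pm1$. The crux is to show that every real $\chi\in\operatorname{Irr}(G\mid\theta)$ satisfies $\epsilon(\chi)=\epsilon(\theta)$. Granting this, the left side becomes $\epsilon(\theta)\sum_{\chi=\overline\chi}e_\chi$, forcing $\sum_{\chi=\overline\chi}e_\chi=1$. Since each $e_\chi\geq1$, exactly one real $\chi$ exists, and it satisfies $e_\chi=1$, i.e.\ $\chi{\downarrow_N}=\theta$. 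Hence $\chi$ is the unique real extension of $\theta$ to $G_\theta$, and (via the reality-preserving Clifford bijection $\psi\leftrightarrow\psi{\uparrow^G}$) the unique real member of $\operatorname{Irr}(G\mid\theta)$.

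The main obstacle is proving $\epsilon(\chi)=\epsilon(\theta)$ for real $\chi$ of odd multiplicity over real $G$-invariant $\theta$. I would attack this via bilinear forms: on writing $V_\chi\cong W\otimes{\mathbb C}^{e_\chi}$ as $N$-module with $W$ the $\theta$-module, every $N$-invariant bilinear form on $V_\chi$ takes the shape $\gamma\otimes B$, where $\gamma$ is the unique (up to scalar) $N$-invariant form on $W$---symmetric or alternating according to $\epsilon(\theta)$---and $B$ is a bilinear form on ${\mathbb C}^{e_\chi}$. The symmetry type of such a form is the product $\epsilon(\theta)\cdot\epsilon(B)$, and the $G$-invariant form realising $\epsilon(\chi)$ forces $B$ to be semi-invariant under a projective action of $G/N$ on ${\mathbb C}^{e_\chi}$. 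The odd order of $G/N$ together with the odd dimension $e_\chi$ then conspire to force $B$ symmetric, essentially because an odd-order group affords no irreducible representation of $-1$ type, ruling out any alternating semi-invariant. This yields $\epsilon(\chi)=\epsilon(\theta)$, completing the argument; making this twisted-Frobenius--Schur analysis of the projective representation on the multiplicity space fully precise is the only substantive calculation.
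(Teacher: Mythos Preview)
Your proof follows the same route as the paper's: reduce to $G=G_\theta$, observe $\Omega(G,N)=N$ since $|G/N|$ is odd, deduce $\sum_{\chi}\epsilon(\chi)\chi(1)=\epsilon(\theta)\theta(1)$ from Lemma~\ref{L:real}, note each $e_\chi=\langle\chi{\downarrow_N},\theta\rangle$ is odd, and conclude once one knows $\epsilon(\chi)=\epsilon(\theta)$ for every real $\chi\in\operatorname{Irr}(G\mid\theta)$. The paper simply quotes this last implication as a consequence of odd multiplicity.

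Your final paragraph, however, makes that implication harder than it is. You correctly set up $\beta=\gamma\otimes B$ with $\gamma$ the essentially unique $N$-invariant form on the $\theta$-module (of type $\epsilon(\theta)$) and $B$ a bilinear form on ${\mathbb C}^{e_\chi}$; the symmetry type of $\beta$ is then $\epsilon(\theta)\cdot\epsilon(B)$, so $\epsilon(B)=\epsilon(\chi)\epsilon(\theta)$. But now the argument is over: $\beta$ is nondegenerate, hence so is $B$, and a nondegenerate alternating form cannot live on an odd-dimensional space. Thus $\epsilon(B)=+1$ and $\epsilon(\chi)=\epsilon(\theta)$. There is no need to invoke the projective $G/N$-action on the multiplicity space, semi-invariance of $B$, or any twisted Frobenius--Schur machinery; the odd \emph{dimension} $e_\chi$ alone does the work, and the odd \emph{order} of $G/N$ enters only through $e_\chi\mid[G:N]$. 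So your ``only substantive calculation'' is in fact a one-line linear algebra observation.
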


We get a simpler version of Lemma \ref{L:real} for real characters of $2$-defect zero:

\begin{Corollary}
Let $\theta$ be real and of\/ $2$-defect $0$. Then $\epsilon(\theta{\uparrow^G})$ equals the number of involutions in $\overline G_\theta$.
\begin{proof}
Recall that $\theta$ has $2$-defect $0$ if $|N|/\theta(1)$ is an odd integer. Then it is known that $\epsilon(\theta)=+1$. We may assume that $\theta$ is $G$-invariant. Let $t\in\Omega(G,N)\backslash N$ and let $\lambda\in\operatorname{Lin}(N\langle t\rangle/N)$ be non-trivial. Then $\theta_t$ and $\lambda\theta_t$ are the two extensions of $t$ to $N\langle t\rangle$. Now $N$ contains all $2$-regular elements of $N\langle t\rangle$. So $\theta_t$ and $\lambda\theta_t$ belong to the same $2$-block of $N\langle t\rangle$. This $2$-block is real. Thus both $\theta_t$ and $\lambda\theta_t$ are real, and $\overline t$ is of $+$ type.
\end{proof}
\end{Corollary}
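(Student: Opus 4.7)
My plan is to apply Lemma~\ref{L:real} and show that every involution in $\overline G_\theta$ is of $+1$ type, so that the $-1$ type count vanishes and $\epsilon(\theta{\uparrow^G})$ reduces to the total number of involutions in $\overline G_\theta$.

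First I would reduce to the case $G = G_\theta$ using the extended Clifford correspondence, which preserves Frobenius--Schur indicators. I would then invoke the standard fact that a real irreducible character of $2$-defect zero satisfies $\epsilon(\theta) = +1$. The identity coset in $\overline G_\theta$ automatically has $\epsilon_t(\theta) = \epsilon(\theta) = +1$ for $t \in N$, so it is of $+1$ type. For a representative $t \in \Omega(G,N) \setminus N$, showing that $\overline t$ is of $+1$ type amounts, by the discussion preceding Lemma~\ref{L:real}, to showing that the extension $\theta_t$ is real.

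The key input is that $[N\langle t\rangle : N] = 2$, so every $2$-regular element of $N\langle t\rangle$ already lies in $N$. Let $\lambda$ be the non-trivial linear character of $N\langle t\rangle/N$; the two extensions of $\theta$ to $N\langle t\rangle$ are $\theta_t$ and $\lambda\theta_t$. They agree on $N$, hence on all $2$-regular elements, so their central characters coincide on every $2$-regular class sum, placing them in a common $2$-block $B$ of $N\langle t\rangle$. Since $\overline{\theta_t}$ also extends $\overline\theta = \theta$, complex conjugation permutes $\{\theta_t, \lambda\theta_t\}$, making $B$ a real block.

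I expect the main obstacle to be the final step: upgrading block-level reality to individual reality, i.e., excluding the possibility that complex conjugation swaps $\theta_t$ and $\lambda\theta_t$ rather than fixing each. I would handle this block-theoretically: $B$ has cyclic defect group of order $2$ and a unique irreducible $2$-Brauer character (the extension to $N\langle t\rangle$ of the reduction of $\theta$, which is real and has the same degree as $\theta$ since each of $\theta_t$, $\lambda\theta_t$ has decomposition number $1$). The Brauer tree of $B$ is thus a single edge joining two vertices; the real Brauer character fixes one of them, and a parity argument on the exceptional vertex forces complex conjugation to fix the tree pointwise, so both ordinary characters of $B$ are individually real. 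This gives $\epsilon_t(\theta) = +1$, completing the proof.
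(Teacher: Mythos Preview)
Your outline is essentially the paper's own argument: reduce to $G=G_\theta$, use $\epsilon(\theta)=+1$, place the two extensions $\theta_t,\lambda\theta_t$ in a common real $2$-block of $N\langle t\rangle$, and conclude both are real. The paper's proof is equally terse at the last step (``This $2$-block is real. Thus both $\theta_t$ and $\lambda\theta_t$ are real''), so you have correctly located the only non-routine point.

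However, your proposed justification of that step does not go through as written. For $p=2$ the inertial index is $e=1$, so the block has cyclic defect group of order $2$ and exceptional multiplicity $m=|D|-1=1$; there is no distinguished exceptional vertex, and the single real Brauer character corresponds to the \emph{edge}, not to a vertex. Nothing in the tree prevents complex conjugation from swapping the two endpoints. A clean way to finish is to note that the defect group $D=\langle z\rangle$ satisfies $D\cap N=1$ (since $B$ covers the defect-zero block of $\theta$), so $z$ is an involution lying in the coset $Nt$. Then $\theta_t(z)\in\mathbb{R}$ because $z=z^{-1}$, while $\lambda\theta_t(z)=-\theta_t(z)$. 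The generalized decomposition numbers at $z$ are $\pm1$ (from the orthogonality relations for a block with $|D|=2$), so $\theta_t(z)\ne 0$; hence complex conjugation, which fixes the real number $\theta_t(z)$, cannot send $\theta_t$ to $\lambda\theta_t$, and $\theta_t$ is real.
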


G. Navarro and R. Gow have independently proved that with $\theta$ as in the statement of the Corollary, there is a canonical (hence real) extension, say $\chi$, of $\theta$ to $G$. Our Corollary can also be proved using this and Gallagher's result that
$$
\operatorname{Irr}({G\mid\theta})=\{\mu\chi\mid\mu\in\operatorname{Irr}(G/N)\}.
$$

\medskip
{\bf Case 2: $\theta$ has $G$-orbit $\{\theta,\overline\theta\}$ and $\theta\ne\overline\theta$.} So $G_{\theta}$ has index $2$ in $G$, and $\theta^g=\overline\theta$, for all $g\in G-G_{\theta}$. As $\theta{\uparrow^G}$ vanishes off $N$, and restricts to $|G_{\theta}:N|(\theta+\overline\theta)$ on $N$, we get
$$
\epsilon(\theta{\uparrow^G})=\frac{|G_{\theta}:N|}{|G|}\sum_{t\in\Omega(G,N)}(\theta(t^2)+\overline\theta(t^2))=\sum_{\overline t\in\Omega(\overline G)\backslash\overline G_{\theta}}\epsilon_t(\theta).
$$
Notice that if $t\in\Omega(G_{\theta},N)$, then $\theta^t=\theta\ne\overline\theta$. So $\epsilon_t(\theta)=0$, and $t$ does not contribute to $\epsilon(\theta{\uparrow^G})$. We say that $\overline t$ has $0$ type for $\theta$. On the other hand, for $t\in\Omega(G,N)\backslash G_{\theta}$, $\epsilon_t(\theta)$ coincides with the F-S indicator of the real irreducible character $\theta^{N\langle t\rangle}$, and we say that $\overline t\in\Omega(\overline G)\backslash\overline G_{\theta}$ has $\epsilon_t(\theta)$ type, for $\epsilon_t(\theta)=\pm1$. Thus we have proved:

\begin{Lemma}\label{L:non-real}
Let $\theta$ be non-real. Then $\epsilon(\theta{\uparrow^G})$ equals the number of\/ $+1$ type involutions in $\overline G_\theta^*\backslash\overline G_\theta$ minus the number of\/ $-1$ type involutions in $\overline G_\theta^*\backslash \overline G_\theta$.
\end{Lemma}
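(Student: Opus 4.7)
The plan is to mimic the approach used in Case 1 of Lemma \ref{L:real}: expand $\epsilon(\theta{\uparrow^G})$ from the definition, use that $\theta{\uparrow^G}$ vanishes off $N$, and repackage the resulting sum as a sum of Gow indicators. The only extra twist relative to Case 1 is that $\theta\ne\overline\theta$, so when we restrict $\theta{\uparrow^G}$ to $N$ we see both $\theta$ and $\overline\theta$.

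First I would reduce to the case $G=G_\theta^*$. Since the extended Clifford correspondence $\operatorname{Irr}(G\mid\theta)\leftrightarrow\operatorname{Irr}(G_\theta^*\mid\theta)$ preserves inner products and F-S indicators, $\epsilon(\theta{\uparrow^G})=\epsilon(\theta{\uparrow^{G_\theta^*}})$, and the set of involutions of $\overline G_\theta^*\backslash\overline G_\theta$ is intrinsic to $G_\theta^*$, so nothing is lost. Under this reduction $[G:G_\theta]=2$, and $\theta{\uparrow^G}$ restricts to $|G_\theta:N|(\theta+\overline\theta)$ on $N$ while vanishing off $N$.

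Next I would write
$$
\epsilon(\theta{\uparrow^G})=\frac{1}{|G|}\sum_{g\in\Omega(G,N)}|G_\theta:N|\bigl(\theta(g^2)+\overline\theta(g^2)\bigr)=\frac{1}{2|N|}\sum_{g\in\Omega(G,N)}\bigl(\theta(g^2)+\overline\theta(g^2)\bigr),
$$
and partition $\Omega(G,N)$ according to its image $\overline t\in\Omega(\overline G)$. Each fibre is a single coset $Nt$, so the inner sum over that fibre equals $|N|\bigl(\epsilon_t(\theta)+\epsilon_t(\overline\theta)\bigr)$. Since Gow indicators are real, $\epsilon_t(\overline\theta)=\overline{\epsilon_t(\theta)}=\epsilon_t(\theta)$, and the contribution of each coset becomes simply $\epsilon_t(\theta)$.

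Finally I would invoke Gow's theorem to kill the cosets that do not contribute: $\epsilon_t(\theta)\ne0$ iff $\theta^t=\overline\theta$, i.e.\ iff $\overline t\in\overline G_\theta^*\backslash\overline G_\theta$. Therefore
$$
\epsilon(\theta{\uparrow^G})=\sum_{\overline t\in\Omega(\overline G_\theta^*)\backslash\overline G_\theta}\epsilon_t(\theta),
$$
and sorting those $\overline t$ according to $\epsilon_t(\theta)=\pm1$ yields the claimed signed count. The only subtle point is the identification $\epsilon_t(\overline\theta)=\epsilon_t(\theta)$ and the interpretation of $\epsilon_t(\theta)$ as the F-S indicator of the real irreducible $\theta{\uparrow^{N\langle t\rangle}}$; both are immediate from Gow's result cited after the definition of $\epsilon_t(\theta)$, so no genuine obstacle arises.
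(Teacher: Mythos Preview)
Your proposal is correct and follows essentially the same route as the paper: reduce to $G=G_\theta^*$, use that $\theta{\uparrow^G}$ vanishes off $N$ and equals $|G_\theta:N|(\theta+\overline\theta)$ on $N$, group the resulting sum into Gow indicators, and drop the cosets in $\overline G_\theta$ via Gow's criterion $\epsilon_t(\theta)\ne0\iff\theta^t=\overline\theta$. The only extra detail you spell out that the paper leaves implicit is the identity $\epsilon_t(\overline\theta)=\overline{\epsilon_t(\theta)}=\epsilon_t(\theta)$, which is indeed immediate.
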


Notice that $\epsilon(\theta{\uparrow^G})=0$, unless $\overline G=\overline G_\theta^*$ splits over $\overline G_\theta$.

Once again, we get a simpler version for defect zero characters of $N$:

\begin{Corollary}
Let $\theta$ be non-real and of $2$-defect zero. Then:
\begin{enumerate}
\item $\epsilon(\theta{\uparrow^G})$ equals the number of involutions in $\overline G_\theta^*\backslash\overline G_\theta$.
\item Each involution in $\overline G_\theta^*\backslash\overline G_\theta$ is the image of an involution in $G$.
\item $\epsilon(\theta{\uparrow^G})=0$ unless $G_\theta^*$ splits over $G_\theta$.
\item There is $\chi\in\operatorname{Irr}(G\mid\theta)$ such that $\epsilon(\chi)=+1$ and $\langle\chi{\downarrow_N},\theta\rangle$ is odd.
\end{enumerate}
\begin{proof}
We assume, as above, that $G=G_\theta^*$. Now $\theta$ is the unique irreducible character in a non-real $2$-block of $N$ which has defect $0$. Let $t\in\Omega(G,N)\backslash G_\theta$ and set $H:=N\langle t\rangle$. Then $\theta{\uparrow^H}$ is a real irreducible character in a $2$-block $B_t$ of $H$ which has defect $0$. In particular $\epsilon_t(\theta)=\epsilon(\theta{\uparrow^H})=+1$. So $\overline t$ is of $+$ type. Now (1) follows.

Next, let $e$ be an involution in $H$ which generates the extended defect group of $B_t$. We claim that $e\in Nt$. For suppose that $e\in N$. We know that $\theta{\uparrow^H}$ occurs once in $1_C{\uparrow^H}$, where $C=\operatorname{C}_H(e)$. Then by Mackey's formula, $\theta$ occurs once in $1_{C\cap N}{\uparrow^N}$. But $C\cap N$ is just the centralizer of the involution $e\in N$ in $N$. As $\theta$ is projective and irreducible, $\theta$ is also real, by the main result in \cite{Murray}. This proves the claim, which establishes (2).

(3) follows directly from (1) and (2).

Finally, to prove (4), let $S$ be a Sylow $2$-subgroup of $G$. Then $(\theta{\uparrow^G}){\downarrow_S}=m\rho_S$, where $m:=\theta(1)|G:N|/|S|$ is odd and $\rho_S$ is the regular character of $S$. In particular $\langle\theta{\uparrow^G},1_S{\uparrow^G}\rangle=m$ is odd. Now $\theta{\uparrow^G}$ is a real character of $G$. So it follows that there is a real $\chi\in\operatorname{Irr}(G)$ such that $\langle\theta{\uparrow^G},\chi\rangle\langle\chi,1_S{\uparrow^G}\rangle$ is odd. As $\langle\chi,1_S{\uparrow^G}\rangle$ is odd and $\chi$ is real, it follows that $\epsilon(\chi)=+1$.
\end{proof}
\end{Corollary}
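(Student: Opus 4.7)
The plan is to reduce to $G = G_\theta^*$ via the extended Clifford correspondence (as discussed before Case 1), then dispatch the four parts in order. For (1), Lemma \ref{L:non-real} reduces the claim to showing every involution $\overline t \in \overline G_\theta^* \backslash \overline G_\theta$ is of $+1$ type. Fix a preimage $t \in \Omega(G,N) \backslash G_\theta$ and set $H := N\langle t\rangle$. Since $\theta^t = \overline\theta \neq \theta$, Clifford's theorem gives that $\theta{\uparrow^H}$ is irreducible, and it is real because $\theta{\uparrow^H}{\downarrow_N} = \theta + \overline\theta$. Its degree $2\theta(1)$ forces $|H|/\theta{\uparrow^H}(1) = |N|/\theta(1)$ to be odd, so $\theta{\uparrow^H}$ has $2$-defect $0$. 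Appealing to the classical fact that a real irreducible character of $2$-defect $0$ has Frobenius--Schur indicator $+1$, we get $\epsilon_t(\theta) = \epsilon(\theta{\uparrow^H}) = +1$, and (1) follows.

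For (2), the approach is block-theoretic. The $2$-block $B_t$ of $H$ containing $\theta{\uparrow^H}$ has defect $0$, is real, and has Frobenius--Schur indicator $+1$, so its extended defect group has order $2$, generated by some involution $e \in H$. One rules out $e \in N$ by contradiction: a Mackey-formula calculation then shows $\theta$ occurs with odd multiplicity in $1_{C_N(e)}{\uparrow^N}$, and since $\theta$ is an irreducible projective character over the centralizer of the involution $e \in N$, the main theorem of \cite{Murray} forces $\theta = \overline\theta$, contradicting our hypothesis. Hence $e \in Nt$, proving (2). Part (3) is then immediate: if $\epsilon(\theta{\uparrow^G}) \neq 0$, (1) yields an involution in $\overline G_\theta^* \backslash \overline G_\theta$, which by (2) lifts to an involution $e \in G_\theta^* \backslash G_\theta$; since $[G_\theta^* : G_\theta] = 2$, $\langle e\rangle$ complements $G_\theta$ in $G_\theta^*$.

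For (4), I would exploit a Sylow $2$-subgroup $S$ of $G$. Because $\theta$ has $2$-defect $0$, $\theta{\uparrow^G}$ vanishes on all $2$-singular elements of $G$ (trivially outside $N$, and on $N$-elements of even order since $\theta$ does), and the identity value is $\theta(1)|G:N|$ whose $2$-part is $|N|_2 \cdot |S|/|N|_2 = |S|$. Hence $\theta{\uparrow^G}{\downarrow_S} = m\rho_S$ with $m := \theta(1)|G:N|/|S|$ odd, and Frobenius reciprocity gives $\langle \theta{\uparrow^G}, 1_S{\uparrow^G}\rangle = m$, odd. Expanding over $\operatorname{Irr}(G\mid\theta)$, the contributions from a non-real pair $\{\chi, \overline\chi\}$ agree (both $\langle \chi{\downarrow_N}, \theta\rangle$ and $\langle \chi, 1_S{\uparrow^G}\rangle$ are invariant under $\chi \mapsto \overline\chi$, using that $\{\theta,\overline\theta\}$ is a $G$-orbit and $1_S{\uparrow^G}$ is rational), so such pairs sum to an even number. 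Hence some real $\chi$ has both multiplicities odd. Since $1_S{\uparrow^G}$ is $\mathbb{Q}$-rational, any real irreducible of Frobenius--Schur indicator $-1$ would occur in it with even multiplicity; thus $\epsilon(\chi) = +1$.

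The principal obstacle is part (2): lifting an involution from a quotient back into the group requires input beyond character theory, and the block-theoretic machinery of extended defect groups for real $2$-blocks, together with the reality theorem of \cite{Murray}, is the technical crux. The remaining parts follow more or less mechanically from (1), (2), Lemma \ref{L:non-real}, and standard Sylow-$2$ considerations.
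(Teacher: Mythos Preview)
Your proposal is correct and follows essentially the same route as the paper's proof: the reduction to $G=G_\theta^*$, the defect-zero argument for (1), the extended-defect-group argument with Mackey's formula and \cite{Murray} for (2), the deduction of (3) from (1) and (2), and the Sylow-$2$ parity argument for (4) all match the paper almost step for step. You supply a bit more detail in places (e.g.\ why $(\theta{\uparrow^G}){\downarrow_S}=m\rho_S$ and why non-real pairs contribute evenly), but there is no substantive difference in strategy.
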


Now for arbitrary $\theta\in\operatorname{Irr}(N)$, Lemmas \ref{L:real} and \ref{L:non-real} show that $\epsilon(\theta{\uparrow^G})$ is given by the difference between the numbers of two types of involutions which conjugate $\theta$ to $\overline\theta$ in $\overline G$:

\section{Real characters over an irreducible character of a normal subgroup}\label{S:real}

Let $\operatorname{Irr}_{\mathbb R}(G)$ be the set of real irreducible characters of $G$ and let $\operatorname{Irr}_{\mathbb R}({G\mid\theta})$ be the subset of real characters lying over $\theta$. In place of Gallagher's commutator sum $T$, we consider the following element in the centre of ${\mathbb C} G$:
$$
T_{\mathbb R}:=\sum_{x,y\in G}xy^{-1}xy=\sum_{K\in\operatorname{Cl}(G)}|C_G(g_K)|(K^+)^2.
$$
Now for each $\chi\in\operatorname{Irr}(G)$ we have
$$
\omega_\chi(T_{\mathbb R})=\sum_{K\in\operatorname{Cl}(G)}|C_G(g_K)||K|^2\frac{\chi(g_K)^2}{\chi(1)^2}=
\left\{
\begin{array}{cl}
 \left(\frac{|G|}{\chi(1)}\right)^2,&\quad\mbox{if $\chi=\overline\chi$,}\\
 0,&\quad\mbox{if $\chi\ne\overline\chi$.}
\end{array}\right.
$$
This gives (c.f. \eqref{E:chi(T)})
\begin{equation}\label{E:chi(R)}
\chi(1)\chi(T_{\mathbb R})=\epsilon(\chi)^2|G|^2,\quad\mbox{for all $\chi\in\operatorname{Irr}(G)$.}
\end{equation}

We now prove the first half of our main theorem:

\begin{proof}[Proof of Theorem \ref{T:main} for real characters]
Suppose that $\theta\in\operatorname{Irr}(N)$ is real. Then the Clifford correspondence preserves reality and we can and do assume that $\theta$ is $G$-invariant. Thus
$$
\theta(1)\theta{\uparrow^G}(T_{\mathbb R})=\sum_{\chi\in\operatorname{Irr}(G\mid\theta)}\chi(1)\chi(T_{\mathbb R})=|G|^2|\operatorname{Irr}_{\mathbb R}(G\mid\theta)|.
$$

Each $\overline x\in\overline G$ has both a centralizer $\operatorname{C}_G(\overline x):=\operatorname{N}_G(Nx)$ and an extended centralizer $\operatorname{C}_G^*(\overline x):=\operatorname{N}_G(Nx\cup Nx^{-1})$ in $G$. Set $\operatorname{I}_G(\overline x):=\{y\in G\mid xy^{-1}xy\in N\}$. Note that $\operatorname{I}_G(\overline x)$ is empty, unless $\overline x$ is real in $\overline G$. With this notation, we have
\begin{equation}\label{E:real_count}
|\operatorname{Irr}_{\mathbb R}(G\mid\theta)|=\frac{1}{|G|\,|N|}\sum_{x\in G\atop\overline{x}{\,\rm real}}\sum_{y\in\operatorname{I}_G(\overline x)}\theta(1)\theta(xy^{-1}xy).
\end{equation}

For $x\in G$, such that $\overline x$ is real in $\overline G$, we may choose $e\in\operatorname{I}_G(\overline x)$ so that $\operatorname{I}_G(\overline x)=\operatorname{C}_G(\overline x)e$. As before, $\theta_x$ is an extension of $\theta$ to $N\langle x\rangle$. Now $\theta_x^e$ is another extension of $\theta$ to $N\langle x\rangle$. Following Gallagher's argument, we have
$$
\begin{aligned}
\theta(1)\sum_{y\in\operatorname{I}_G(\overline x)}\theta(xx^y)
&=\theta_x(x)\sum_{c\in\operatorname{C}_G(\overline x)}\theta_x^e(x^c)\\
&=\left\{ 
    \begin{array}{ll}
      |\operatorname{C}_G(\overline{x})|\,\theta_x(x)\theta_x^e(x),&\quad\mbox{if $\overline x$ is good for $\theta$.}\\
      0,&\quad\mbox{otherwise}.
    \end{array}
  \right.
\end{aligned}
$$

Suppose now that $\overline x$ is good for $\theta$. As $\overline\theta_x$ is an extension of $\overline\theta=\theta$ to $N\langle x\rangle$, we may write $\theta_x^e=\sigma\overline\theta_x$, for some $\sigma\in\operatorname{Lin}(\langle \overline x\rangle)$. Let $\lambda\in\operatorname{Irr}(\langle \overline x\rangle)$. As $e$ inverts $\overline x$, also $\lambda^e=\overline\lambda$. So $(\lambda\theta_x)^e=\overline\lambda\theta_x^e =\sigma\overline{\lambda\theta_x}$. This shows that $\sigma$ is independent of the choice of $e$ and of the extension $\theta_x$ of $\theta$ to $N\langle x\rangle$.

As $e^2\in\operatorname{C}_G(\overline x)$, and $\overline x$ is good for $\theta$, we have
$$
\theta_x=\theta_x^{e^2}=(\sigma\overline\theta_x)^e=\overline\sigma^2\theta_x.
$$
So $\overline\sigma^2$ is trivial, whence $\sigma=\overline\sigma$. So if $\sigma$ is non-trivial, $\operatorname{ker}(\sigma)$ has index $2$ in $\langle\overline x\rangle$, in which case $\sigma(x)=-1$. This shows that $\sigma(x)=1$, if $\theta_x^e=\overline\theta_x$, and $\sigma(x)=-1$, if $\theta_x^e\ne\overline\theta_x$\footnote{In the notation of Section \ref{S:f-s-induced}, $\sigma(x)=(-1)^{1+\epsilon_e(\theta_x)^2}$}.

As $x$ is good for $\theta$, $nx$ is good for $\theta$ and $\sigma(x)=\sigma(nx)$, for all $n\in N$. So
$$
\sum_{n\in N}\theta_x(xn)\theta_x^e(xn)=\sigma(x)\sum_{n\in N}|\theta_x(xn)|^2=|N|\sigma(x).
$$

Finally $\sigma(\overline x)=\sigma(\overline y)$, whenever $\overline x$ and $\overline y$ are conjugate in $\overline G$. So
$$
|\operatorname{Irr}_{\mathbb R}(G\mid\theta)|=\frac{1}{|G|\,|N|}\sum_{\overline x{\,\rm good}}|\operatorname{C}_{\overline G}(\overline x)|\sigma(\overline x)=\sum\sigma(\overline x),
$$
where $\overline x$ ranges over a set of representatives for the real conjugacy classes of $\overline G$ which are good for $\theta$.
\end{proof}

Note that if $\overline x$ is $2$-regular, then it is good for $\theta$. For, we can choose $\theta_x$ to be the unique real extension of $\theta$ to $N\langle x\rangle$ (c.f. Richards's Theorem). Then for all $y\in\operatorname{C}_G(\overline x)$, $\theta_x^y$ is a real extension of $\theta$ to $N\langle x\rangle$. So $\theta_x^y=\theta_x$, by the uniqueness of $\theta_x$. Moreover, $\sigma$ is trivial, as it is a real character of the odd order group $\langle\overline x\rangle$. So
$$
\sigma(\overline x)=+1,\quad\mbox{if $\overline x$ is a $2$-regular element of $\overline G$.}
$$

Suppose instead that $\overline x^2=\overline1$. Then $\theta_x^e=\theta_x$, as $e\in\operatorname{C}_G(\overline x)$. So $\sigma(\overline x)=1$ if $\theta_x$ is real and $\sigma(\overline x)=-1$, if $\theta_x$ is not real.

Our corollary could be proved using Clifford theory and Brauer's permutation lemma:

\begin{Corollary}
Let $Z$ be a central subgroup of $G$ of order $2$, and let $\zeta$ be the non-trivial linear character of $Z$. Then the number of real irreducible characters in $\operatorname{Irr}(G\mid\zeta)$ is the number of real splitting classes of\/ $\overline G$ minus twice the number of real splitting classes of\/ $\overline G$ which are the image of non-real classes of\/ $G$. 
\begin{proof}
Let $Z=\langle z\rangle$. There are two possibilites for each $g\in G$. Suppose first that $g$ is not $G$-conjugate to $zg$, which means $\operatorname{C}_G(\overline g)=\operatorname{C}_G(g)$. Then $\overline g$ is $\zeta$-good as $\zeta_g^c=\zeta_g$, for all $c\in\operatorname{C}_G(gZ)$. Also the $\overline G$-conjugacy class of $\overline g$ is the image of two conjugacy classes of $g$; the one containing $g$ and the other containing $zg$. Each $\chi\in\operatorname{Irr}(G\mid\zeta)$ satisfies $\chi(zg)=-\chi(g)$.

Suppose then that $[\operatorname{C}_G(\overline g):\operatorname{C}_G(g)]=2$. Choose $x\in G$ such that $g^x=zg$. Then
$$
\zeta_g^x(g)=\zeta_g(zg)=-\zeta_g(g)\ne\zeta_g(g),\quad\mbox{as $\zeta_g(g)\ne0$}.
$$
So $gZ$ is not $\zeta$-good. Also the $\overline G$-conjugacy class of $\overline g$ is the image of one conjugacy class of $g$. Each $\chi\in\operatorname{Irr}(G\mid\zeta)$ satisfies $\chi(zg)=\chi(g)=0$.

The previous two paragraphs show that the number of $\zeta$-good conjugacy classes of $G/Z$ equals the number of `splitting' conjugacy classes of $G/Z$, which is what we expect.

Now suppose that $gN$ is both $\zeta$-good and real, in $G/Z$. So there is $e\in G$ such that $g^e\in\{g^{-1},zg^{-1}\}$. Suppose that $g$ is not real in $G$. Then $g^e=zg^{-1}$. In that case $\zeta_g^e(g)=\zeta(zg^{-1})=-\overline\zeta_g(g)$. So $\sigma(g)=-1$. The other possibility is that $g$ is real in $G$. So we may assume that $g^e=g^{-1}$ and then $\zeta_g^e=\overline\zeta_g$. So $\sigma(g)=1$.
\end{proof}
\end{Corollary}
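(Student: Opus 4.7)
The plan is to apply Theorem~\ref{T:main} with $N:=Z$ and $\theta:=\zeta$. Since $Z\leq Z(G)$, the character $\zeta$ is $G$-invariant and satisfies $\overline\zeta=\zeta$, so $G_\zeta=G_\zeta^*=G$ and the enumeration reduces to counting the $\zeta$-good conjugacy classes of $\overline G:=G/Z$ weighted by the indicator $\sigma(\overline g)\in\{0,\pm1\}$.

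First I would identify the $\zeta$-good cosets. For $g\in G$ there are two possibilities: either $\operatorname{C}_G(\overline g)=\operatorname{C}_G(g)$, in which case $g$ and $zg$ lie in distinct $G$-classes (a splitting class of $\overline G$), or $[\operatorname{C}_G(\overline g):\operatorname{C}_G(g)]=2$ with some $x\in G$ satisfying $g^x=zg$. In the splitting case every $c\in\operatorname{C}_G(\overline g)$ already centralises $Z\langle g\rangle$, so $\zeta_g^c=\zeta_g$ holds trivially. In the non-splitting case, the key observation is that $\zeta(z)=-1$ forces $\zeta_g^x(g)=\zeta_g(zg)=-\zeta_g(g)\ne\zeta_g(g)$, so $\overline g$ is not $\zeta$-good. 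Hence the $\zeta$-good cosets of $\overline G$ are precisely the splitting classes.

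Next I would evaluate $\sigma(\overline g)$ for each good coset that is real in $\overline G$. Such a coset admits some $e\in G$ with $g^e\in\{g^{-1},zg^{-1}\}$, and a short calculation using centrality of $z$ shows that $g$ is real in $G$ iff $zg$ is, so the two preimage $G$-classes share reality. If the preimages are real, choose $e$ with $g^e=g^{-1}$; then $\zeta_g^e=\overline{\zeta_g}$, giving $\sigma(\overline g)=+1$. If the preimages are non-real but $\overline g$ is still real in $\overline G$, every admissible $e$ forces $g^e=zg^{-1}$, and then $\zeta_g^e(g)=-\overline{\zeta_g}(g)$, giving $\sigma(\overline g)=-1$.

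Writing $A$ for the number of real splitting classes of $\overline G$ and $B$ for the subset of those whose $G$-preimages are non-real, the $\sigma_+$-count is $A-B$ and the $\sigma_-$-count is $B$, so Theorem~\ref{T:main} yields $(A-B)-B=A-2B$, exactly the claimed formula. I expect the subtlest point to be the sign bookkeeping in the third step: one must carefully separate the two reasons a splitting class can be real in $\overline G$ (either its preimages are genuinely real in $G$, or they are merely swapped by $e$ via $zg^{-1}$), with the second mechanism flipping the sign of $\sigma$.
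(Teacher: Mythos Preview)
Your proposal is correct and follows essentially the same route as the paper's proof: identify the $\zeta$-good classes with the splitting classes via the dichotomy $\operatorname{C}_G(\overline g)=\operatorname{C}_G(g)$ versus $[\operatorname{C}_G(\overline g):\operatorname{C}_G(g)]=2$, and then compute $\sigma(\overline g)=\pm1$ according to whether an inverting element sends $g$ to $g^{-1}$ or to $zg^{-1}$. Your version is in fact slightly more explicit than the paper's, since you spell out that $g$ and $zg$ have the same reality status in $G$ and you carry through the final arithmetic $(A-B)-B=A-2B$.
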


\medskip
{\bf Example:} Suppose that $G=2.A_8$ and $\overline G=A_8$. Then $\overline G$ has $9$ splitting classes and $9$ faithful irreducible characters. In addition, $\overline G$ has $5$ real splitting classes. However one of these $6B$ is the image of two non-real classes of $G$. So $G$ has $5-2=3$ real faithful irreducible characters.

\medskip
{\bf Example:}
Let $G=\operatorname{GL}(2,3)$ and $N=\operatorname{Z}(G)$, cyclic of order $2$. So $G/N\cong S_4$. Now $G$ has three irreducible characters lying over the non-trivial character $\zeta$ of $N$, but only one of these characters is real. So $G/N$ has three $\zeta$-good conjugacy classes, all of which are real. We deduce that $\sigma(x)=1$ for two classes and $\sigma(x)=-1$ for the remaining class.

\medskip
We finish by proving the second half of Theorem \ref{T:main}:

\begin{proof}[Proof of Theorem \ref{T:main} for non-real characters] Suppose that $\theta\in\operatorname{Irr}(G)$ is not real. By the extended Clifford correspondence, we can and do assume that $\theta$ has $G$-orbit $\{\theta,\overline\theta\}$. So $G=G_\theta^*$ and $M:=G_\theta$ has index $2$ in $G$.

For $\chi\in\operatorname{Irr}(G\mid\theta)$, we have $\langle\chi{\downarrow_N},\theta\rangle=\frac{\chi(1)}{2\theta(1)}$. So
$$
2\theta(1)\theta{\uparrow^G}=\sum_{\chi\in\operatorname{Irr}(G\mid\theta)}\chi(1)\chi.
$$
Also $2\theta{\uparrow^G}$ vanishes off $N$ and restricts to $|G:N|(\theta+\overline\theta)$ on $N$.

Let $T_{\mathbb R}$ be as in the previous section. Applying $2\theta(1)\theta{\uparrow^G}$ to $T_{\mathbb R}$ we get
\begin{equation}\label{E:real}
|\operatorname{Irr}_{\mathbb R}(G\mid\theta)|=\frac{\theta(1)}{2|G|\,|N|}\sum_{x\in G\atop\overline x{\rm\,real\,}}\sum_{y\in\operatorname{I}_G(\overline x)}(\theta(xx^y)+\overline\theta(xx^y)).
\end{equation}

Now $\operatorname{I}_G(\overline x)$ is a union of one or two cosets of $\operatorname{C}_M(\overline x)$, each of which is contained in $M$ or in $G\backslash M$. In addition, either $x\in M$ or $x\in G\backslash M$. This gives three types of cosets $\operatorname{C}_M(\overline x)e\subseteq\operatorname{I}_G(\overline x)$:
\begin{itemize}
\item[Type 1:] $x\in M$ and $e\in\operatorname{I}_G(\overline x)\backslash M$.
\item[Type 2:] $x\in M$ and $e\in\operatorname{I}_M(\overline x)$.
\item[Type 3:] $x\in G\backslash M$.
\end{itemize}
As we shall see, only the first type contributes to $|\operatorname{Irr}_{\mathbb R}(G\mid\theta)|$.

\medskip
{\bf Type 1:} Suppose that $x\in M$ and $e\in\operatorname{I}_G(\overline x)\backslash M$. So $\overline x^e=\overline x^{-1}$ and $\theta^e=\overline\theta$. We evaluate the contribution of the coset $\operatorname{C}_M(\overline x)e$ to the right hand side of \eqref{E:real}. Choose an extension $\theta_x$ of $\theta$ to $N\langle x\rangle$. Now $\theta_x^e$ is an extension of $\overline\theta$ to $N\langle x\rangle$. So $\theta_x^e=\sigma\overline\theta_x$, for some $\sigma\in\operatorname{Lin}(\langle\overline x\rangle)$, where $\sigma$ is independent of $\theta_x$. Moreover, for $c\in\operatorname{C}_G(\overline x)$, $\theta_x^c=\omega_c\theta_x$ for some $\omega_c\in\operatorname{Lin}(\langle\overline x\rangle)$, where $\omega_y$ is independent of $\theta_x$. So $\theta_x^{ce}=(\omega_c\theta_x)^e=\overline\omega_c\sigma\overline\theta_x$, using the fact that $e$ inverts $\operatorname{Lin}(\langle\overline x\rangle)$. Thus
$$
\begin{aligned}
\theta(1)\sum_{c\in\operatorname{C}_M(\overline{x})}(\theta(xx^{ce})+\overline\theta(xx^{ce}))
=\left(\sum_{c\in\operatorname{C}_{M}(\overline{x})}(\theta_x(x)\overline\omega_c(x)\sigma(x)\overline\theta_x(x)+\overline\theta_x(x)\omega_c(x)\overline\sigma(x)\theta_x(x))\right)\\
=\left\{ 
\begin{array}{cl}
|\operatorname{C}_{M}(\overline{x})|\,|\theta_x(x)|^2(\sigma(x)+\overline{\sigma(x)}),&\quad\mbox{if $x$ is good for $\theta$ in $M$.}\\
0,&\quad\mbox{otherwise}.
\end{array}
\right.
\end{aligned}
$$
Now suppose that $x$ is good for $\theta$ in $M$. Then $\theta_x^{e^2}=\theta_x$, as $e^2\in\operatorname{C}_M(\overline x)$. So $\theta_x=\theta_x^{e^2}=(\sigma\overline\theta_x)^e=\overline\sigma^2\theta_x$. We deduce that $\sigma^2$ is trivial and hence that $\sigma$ is real. So $\sigma(x)=1$, if $\theta_x^e=\overline\theta_x$ and $\sigma(x)=-1$ if $\theta_x^e\ne\overline\theta_x$. Therefore the contribution of $\operatorname{C}_M(\overline{nx})e$ to \eqref{E:real}, as $n$ ranges over $N$, is
$$
|\operatorname{C}_{M}(\overline{x})|\sum_{n\in N}|\theta_x(nx)|^2(\sigma(nx)+\overline{\sigma(nx)}))=2|\operatorname{C}_{M}(\overline{x})||N|\sigma(x).
$$

\medskip
{\bf Type 2:} Suppose that $x\in M$ and $e\in\operatorname{I}_M(\overline x)$. So $\overline x^e=\overline x^{-1}$ and $\theta^e=\theta$. We evaluate the contribution of $\operatorname{C}_M(\overline x)e$ to \eqref{E:real}. Choose an extension $\theta_x$ of $\theta$ to $N\langle x\rangle$. Then for $y\in\operatorname{C}_M(\overline x)e$, we have $\theta_x^y=\omega_y\theta_x$, where $\omega_y\in\operatorname{Lin}(\langle\overline x\rangle)$ is independent of $\theta_x$. Taking complex conjugates, we get $\overline\theta_x^y=\overline\omega_y\overline\theta_x$. So
$$
\begin{aligned}
\theta(1)\sum_{y\in\operatorname{C}_M(\overline{x})e}(\theta(xx^y)+\overline\theta(xx^y))
=\left(\sum_{y\in\operatorname{C}_M(\overline{x})e}(\theta_x(x)\omega_y(x)\theta_x(x)+\overline\theta_x(x)\overline\omega_y(x)\overline\theta_x(x))\right)\\
=\left\{ 
\begin{array}{cl}
|\operatorname{C}_M(\overline{x})|\left(\theta_x(x)^2+\overline\theta_x(x)^2)\right),&\quad\mbox{if $x$ is good for $\theta$ in $M$.}\\
0,&\quad\mbox{otherwise}.
\end{array}
\right.
\end{aligned}
$$
Suppose that $x$ is good for $\theta$. As $\theta_x{\downarrow_N}=\theta$, but $\theta\ne\overline\theta$, we get $\langle(\theta_x^2+\overline\theta_x^2){\downarrow_N},1_N\rangle=0$. So
$$
\sum_{n\in N}(\theta_x^2(nx)+\overline\theta_x^2(nx))=0.
$$
We conclude that the contribution of $\operatorname{C}_M(\overline{nx})e$ to \eqref{E:real}, as $n$ ranges over $N$, is zero.

\medskip
{\bf Type 3:} We finally consider the case that $x\in G\backslash M$ and $e\in\operatorname{I}_G(\overline x)$. Clearly we may assume that $e\in M$. So $\overline x^e=\overline x^{-1}$, $\theta^e=\theta$ and $\operatorname{I}_G(\overline x)=\operatorname{C}_G(\overline x)e$.

Now $\theta^x=\overline\theta\ne\theta$. So we cannot extend $\theta$ to $N\langle x\rangle$. On the other hand, $\theta^{x^2}=\theta$. So we can choose an extension $\theta_{x^2}$ of $\theta$ to $N\langle x^2\rangle$. Now $\theta_{x^2}^x$ is an extension of $\overline\theta$ to $N\langle x^2\rangle$. In particular $\theta_{x^2}^x\ne\theta_{x^2}$. Set $\hat\theta_x:=\theta_{x^2}{\uparrow^{N\langle x\rangle}}$. Then $\hat\theta_x$ is an irreducible character of $N\langle x\rangle$ which vanishes off $N\langle x^2\rangle$ and restricts to $\theta+\overline\theta$ on $N$.

Next $X:=\{x^{ce}\mid c\in\operatorname{C}_G(\overline{x})\}$ is the conjugacy class of $\operatorname{C}_G(\overline{x})$ containing $x^e$. So $X^+$ is in the centre of ${{\mathbb C}\operatorname{C}_G(\overline{x})}$. Moreover $|X|=|\operatorname{C}_G(\overline{x}):\operatorname{C}_G(x)|$ and $\sum\limits_{y\in\operatorname{C}_G(\overline{x})}xx^{ey}=|\operatorname{C}_G(x)|xX^+$.

Now for $y\in\operatorname{I}_G(\overline{x})$, we have $xx^y\in N\leq N\langle x^2\rangle$. So $\theta(xx^y)+\overline\theta(xx^y)=\hat\theta_x(xx^y)$. Thus
$$
\theta(1)\sum_{y\in\operatorname{I}_G(\overline{x})}(\theta(xx^y)+\overline\theta(xx^y))=
\frac{1}{2}\hat\theta_x(x)\sum_{c\in\operatorname{C}_{G}(\overline{x})}\hat\theta_x(x^{ec})=0,
$$
as $\hat\theta_x$ vanishes off $N\langle x^2\rangle$. So the contribution of $\operatorname{I}_G(\overline x)$ to \eqref{E:real} is zero. 

\medskip
We can now compute $|\operatorname{Irr}_{\mathbb R}(G\mid\theta)|$. Note that $\sigma(x)=\sigma(y)$, whenever $\overline x$ and $\overline y$ are conjugate in $\overline G$. Now suppose that $\overline x\in\overline M$ and $(\overline x,\theta)^e=(\overline x^{-1},\overline\theta)$, for some $e\in G$. There are two cases. The first is that the conjugacy class of $\overline G$ which contains $\overline x$ is a single real conjugacy class $K$ of $\overline M$. Then $|\operatorname{C}_G(\overline x)|=2|\operatorname{C}_M(\overline x)|$. So $\sum_{\overline x\in K}\frac{2|\operatorname{C}_M(\overline x)|}{|G|}=1$. The second case is that the conjugacy class $K$ of $\overline G$ containing $\overline x$ is a union of two non-real conjugacy classes of $\overline M$. Then $|\operatorname{C}_G(\overline x)|=|\operatorname{C}_M(\overline x)|$. So $\sum_{\overline x\in K}\frac{2|\operatorname{C}_M(\overline x)|}{|G|}=2$. We conclude that
$$
|\operatorname{Irr}_{\mathbb R}(G\mid\theta)|=\sum_{\overline x{\rm\,good}}\frac{2|\operatorname{C}_M(\overline x)|}{|G|}\sigma(x)=\sum\sigma(\overline x),
$$
where $\overline x$ ranges over a set of representatives for the conjugacy classes of $\overline M$ which are good for $\theta$, such that $\overline x$ is inverted by some element of $G\backslash M$.
\end{proof}

\section*{Acknowledgment}
Parts of this paper were written while the author visited the University of Valencia in September 2022. We gratefully acknowledge the financial support provided by the Department of Mathematics, University of Valencia and thank Gabriel Navarro for his encouragement. We received valuable input from Rod Gow and Benjamin Sambale on this paper.

\end{document}